\newcommand{\aut}{\operatorname{Aut}}
\newcommand{\F}{\mathbb {F}}
\newtheorem{theorem}{Theorem}[section]
\newtheorem{definition}[theorem]{Definition}
\newtheorem{lemma}[theorem]{Lemma}
\newtheorem{corollary}[theorem]{Corollary}
\newtheorem{proposition}[theorem]{Proposition}
\newtheorem{remark}[theorem]{Remark}
\begin{document}

\title[Weierstrass semigroup and Automorphism group of the curves $\mathcal{X}_{n,r}$]{Weierstrass semigroup and Automorphism group of the  curves $\mathcal{X}_{n,r}$}

\author[H.\, Borges]{H.\, Borges}
\author[A.\, Sep\'ulveda]{A.\, Sep\'ulveda}
\author[G.\, Tizziotti]{G.\, Tizziotti}

\maketitle

\textbf{Abstract} In this paper, we determine the Weierstrass semigroup $H(P_{\infty})$ and the full automorphism group of a certain family of curves $\mathcal{X}_{n,r}$, which was recently   introduced by Borges and Concei\c{c}\~ao.

\section{Introduction}

Since the algebraic geometric (AG)  codes were introduced by V.D. Goppa (\cite{goppa1}, \cite{goppa2}), the study of several topics in the  theory of algebraic curves has been intensified. A particular case is the investigation  of   Weierstrass semigroups $H(P)$ at  points $P$ of an algebraic  curve. It is well known that the structure of $H(P)$ provides both local and global   information regarding this curve.  The knowledge of such structure can prove quite useful, for instance, in improving the lower bound on the mimimum distance of AG codes arising from the  curve (see \cite{carvalho2}, \cite{garcia}, \cite{kirfel}, \cite{gretchen}). 

The  group of automorphisms of a curve is another topic  that plays a prominent role in several  aspects of the theory. To be concise, it should be noted  that many curves have a characterization based on the structure and the action of their automorphism groups.  It is worth mentioning that AG codes can also be studied in connection with these groups (see \cite{stichtenoth},\cite{wesemeyer}). Applications of the automorphism groups in the systematics of encoding and decoding can be found  in \cite{heegard} and \cite{joyner}, respectively.

In \cite{c2}, Borges and Concei\c{c}\~ao used minimal value set polynomials to introduce a large family of curves that generalize the Hermitian curve. An example  is the following curve over $\F_{q^n}$:
  $$
{\mathcal H}: \hspace{.3 cm}{\rm T_n}(y)={\rm T_n}(x^{q^r+1})\,\quad \pmod{x^{q^{n}}-x}
   $$
where, for a symbol $z$, 
   $$
{\rm T_n}(z):=z^{q^{n-1}}+z^{q^{n-2}}+\ldots+z\,,
   $$ 
and $r=r(n)\geq n/2$ is the smallest positive integer such that $\gcd(n,r)=1$, i.e.,

   \begin{equation}\label{r1}
r= \begin{cases} 
1, & \text{ if }\quad n=2\\
n/2+1, & \text{ if }\quad n\equiv0\pmod4\\
n/2+2, & \text{ if }\quad n\geq 6,\, n\equiv2\pmod4\\
(n+1)/2, & \text{ if }\quad n \text{ is odd}\, .
\end{cases} 
  \end{equation}
Borges and Concei\c{c}\~ao have shown  that the curve ${\mathcal H}$ has a somewhat large ratio $${\#\mathcal H}(\mathbb{F}_{q^n})/g({\mathcal H}),$$
where   ${\mathcal H}(\F_{q^n})$ is the set of $\mathbb{F}_{q^n}$-rationals points, and  $g({\mathcal H})$ is the genus of  $\mathcal{H}$. They also computed the Weierstrass semigroup $H(P_{\infty})$, where $P_{\infty}=(0:1:0)$ is the only singular point on the projective closure of  $\mathcal{H}$.  In particular, is was proved   that   ${\mathcal H}$ is an example of a so-called Castle curve,  which are 
special types of curves potentially suitable for construction of AG codes with good parameters  (see \cite{MST}).

   In this paper, we further investigate the family of curves introduced in  \cite{c2}. More precisely, within this family,    we consider a set of curves $\mathcal{X}_{n,r}$ (which includes the curve $\mathcal{H}$)  and determine  the Weierstrass semigroup $H(P_{\infty})$ as well as the curves'  full group of automorphisms. In particular, we arrive at a larger set of Castle curves.

This paper is organized as follows. In Section 2, some basics concepts related to Weierstrass semigroups  and automorphism groups of curves are recalled, and some  properties of the curves $\mathcal{X}_{n,r}$ are presented. In the Section 3, we determine the Weierstrass semigroup $H(P_{\infty})$ and prove that $\mathcal{X}_{n,r}$ are Castle curves, while providing an alternative, somewhat simpler, plane model for the curves $\mathcal{X}_{n,r}$.  Finally, in  Section 4, the full automorphism group of $\mathcal{X}_{n,r}$ is obtained.

\section{Preliminaries}

Hereafter  we  use the following notation: 

\begin{itemize}

\item $\mathbb{F}_{q}$ is the finite field with $q$ elements.
\item $\mathcal{X}$ is a projective, non-singular, geometrically irreducible algebraic curve of genus $g\geq 2$ defined over $\mathbb{F}_{q}$.
\item $\mathbb{F}_{q}(\mathcal{X})$ denotes  the field of $\mathbb{F}_{q}$-rational functions on $\mathcal{X}$.
\item For a  point $P\in \mathcal{X}$, the discrete valuation at $P$ is denoted by $v_P$.
\item For $f \in \mathbb{F}_{q}(\mathcal{X})\backslash \{0\}$, the pole divisor of $f$ is denoted by $div_{\infty}(f)$.
\item $\# \mathcal{X}(\mathbb{F}_q)$  denotes the number of $\mathbb{F}_{q}$-rational points of $\mathcal{X}$.
\item $\mathbb{N}_{0}=\{0,1,2,\ldots\}$ denotes  the semigroup  of non-negative integers.

\end{itemize}

\subsection{Weierstrass Semigroup}  Here we present the relevant data on Weierstrass semigroups that will be used in Section 3.  For a more detailed discussion on the basics of this topic, see  \cite[chapter 6]{HKT}.

  For a rational point $P\in \mathcal{X}$,  the \textit{Weierstrass semigroup} of $\mathcal{X}$ at $P$ is defined by
$$
H(P):= \{n \in \mathbb{N}_{0} \mbox{ : } \exists f \in \mathbb{F}_{q}(\mathcal{X}) \mbox{ with } div_{\infty}(f) = n P \},
$$
and the  set $G(P) = \mathbb{N}_{0} \setminus H(P)$ is called \textit{Weierstrass gap set} of $P$. From the Weierstrass Gap Theorem,   $G(P)=\{\alpha_1,\ldots, \alpha_{g}\}$ and
$$1=\alpha_1 < \cdots < \alpha_{g} \leq 2g-1.$$ 
The semigroup $H(P)$ is called symmetric if $\alpha_g = 2g-1$. The curve $\mathcal{X}$ is called \emph{Castle curve} if $H(P)= \{ 0=m_1 < m_2 < \cdots \}$ is symmetric and $\# \mathcal{X}(\mathbb{F}_q)= m_2 q + 1$.
 For more details about Castle curves and their applications in coding theory, see \cite{MST}.

\begin{definition}\label{telescopic}
Let $(a_{1}, \ldots , a_m)$ be a sequence of positive integers whose greatest common divisor is $1$. Set $d_0=0$, and define $d_i := gcd (a_1, \ldots , a_i)$ and $A_i:= \{ \frac{a_1}{d_i}, \ldots , \frac{a_i}{d_i} \}$ for $i=1,\ldots,m$.  If $\frac{a_i}{d_i}$ lies in the semigroup generated by $A_i$ for $i=2, \ldots , m$, then the sequence $(a_1 , \ldots , a_m)$ is called \textit{telescopic}. A semigroup is called  telescopic if
it is generated by a telescopic sequence.
\end{definition}

For a semigroup $S$, the number of gaps and the largest gap of $S$ will be denoted by $g(S)$ and $l_g(S)$, respectively. The following result will be a significant factor  in  determining the semigroup $H(P_{\infty})$ of the curves $\mathcal{X}_{n,r}$.

\begin{lemma} [\cite{kirfel}, Lemma 6.5] \label{lemma telescopic}
If $S_m$ is  the semigroup generated by a telescopic sequence $(a_1 , \ldots, a_m)$, then
\begin{itemize}

\item $\displaystyle l_g(S_m) = d_{m-1} l_g(S_{m-1}) + (d_{m-1}-1)a_{m} = \sum_{i=1}^{m} (\frac{d_{i-1}}{d_i}-1)a_i$

\item $g(S_m)= d_{m-1} g(S_{m-1}) + (d_{m-1}-1)(a_{m}-1)/2 = (l_{g}(S_m) + 1)/2,$

\end{itemize}
where  $d_{0}=0$. In particular, telescopic semigroups are symmetric. 
\end{lemma}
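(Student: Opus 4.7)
The plan is to proceed by induction on $m$. The base case $m=1$ forces $a_1 = 1$ (since $d_1 = \gcd(a_1) = 1$), so $S_1 = \N_0$ has no gaps, and the stated formulas hold with the convention $l_g(S_1) = -1$ and $g(S_1) = 0$.

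For the inductive step, I would exploit the central structural property of telescopic sequences: every element $s \in S_m$ admits a unique decomposition $s = \lambda\, a_m + d_{m-1}\, t$ with $0 \le \lambda < d_{m-1}/d_m$ and $t \in S_{m-1}'$, where $S_{m-1}'$ is the numerical semigroup generated by $(a_1/d_{m-1}, \ldots, a_{m-1}/d_{m-1})$. Existence uses the telescopic hypothesis that $a_m/d_m$ lies in $S_{m-1}'$, which allows one to reduce any coefficient of $a_m$ that exceeds $d_{m-1}/d_m - 1$. Uniqueness follows from $\gcd(a_m/d_m,\, d_{m-1}/d_m) = 1$, which forces $\lambda \pmod{d_{m-1}/d_m}$ to be determined by $s \pmod{d_{m-1}}$. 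Consequently, a nonnegative integer $n$ is a gap of $S_m$ precisely when the unique $\lambda \in \{0, \ldots, d_{m-1}/d_m - 1\}$ with $n \equiv \lambda\, a_m \pmod{d_{m-1}}$ makes $(n - \lambda a_m)/d_{m-1}$ a gap of $S_{m-1}'$.

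From this characterization the Frobenius number $l_g(S_m)$ is extracted by maximizing over both parameters: the largest gap occurs at $\lambda = d_{m-1}/d_m - 1$ together with $(n - \lambda a_m)/d_{m-1} = l_g(S_{m-1}')$, yielding the claimed recursive formula, which then unrolls telescopically to $\sum_{i=1}^m (d_{i-1}/d_i - 1)\, a_i$. For the genus, the same partition by the $d_{m-1}/d_m$ residue classes of $\lambda$ together with the inductive count $g(S_{m-1}')$ produces the recursion for $g(S_m)$, and the identity $g(S_m) = (l_g(S_m) + 1)/2$ (equivalently, symmetry) is inherited from $S_{m-1}'$ through the inductive hypothesis by a direct arithmetic check. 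The main obstacle is establishing the unique representation cleanly, since this is precisely where both the coprimality $\gcd(a_m/d_m,\, d_{m-1}/d_m)=1$ and the telescopic hypothesis interact; once it is in hand, the recursions and the closed-form expression for $l_g(S_m)$ are routine computations.
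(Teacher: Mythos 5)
The paper offers no proof of this lemma at all --- it is quoted directly from Kirfel--Pellikaan \cite{kirfel}, Lemma~6.5 --- so there is nothing internal to compare against; your proposal is, in substance, the standard proof from that reference, and its skeleton is sound. The unique representation $s=\lambda a_m+d_{m-1}t$ with $0\le\lambda<d_{m-1}/d_m$ and $t$ in the normalized semigroup $S_{m-1}'$ is exactly the right structural input, and your existence argument (reduce the coefficient of $a_m$ using $(d_{m-1}/d_m)\,a_m\in d_{m-1}S_{m-1}'$) and uniqueness argument (via $\gcd(a_m/d_m,\,d_{m-1}/d_m)=1$) are both correct; note that you are implicitly using the intended definition of telescopic, namely $a_i/d_i\in\langle A_{i-1}\rangle$ --- the paper's Definition~\ref{telescopic} writes $A_i$, which is vacuous, so your reading is the right one. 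One point needs tightening: your criterion ``$n$ is a gap of $S_m$ precisely when $(n-\lambda a_m)/d_{m-1}$ is a gap of $S_{m-1}'$'' must be understood as ``$(n-\lambda a_m)/d_{m-1}\notin S_{m-1}'$,'' which includes the case where that integer is \emph{negative} while $n\ge 0$. Those $n$ are gaps of $S_m$ too, and they are precisely what produces the second term of the genus recursion: for $d_m=1$ their number is
$$\sum_{\lambda=0}^{d_{m-1}-1}\left\lfloor \frac{\lambda a_m}{d_{m-1}}\right\rfloor=\frac{(a_m-1)(d_{m-1}-1)}{2},$$
whereas counting only the lifted gaps of $S_{m-1}'$ would give $d_{m-1}g(S_{m-1}')$ alone and the formula would fail. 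With that correction the extraction of $l_g(S_m)$ at $\lambda=d_{m-1}/d_m-1$, the telescoping to $\sum_{i=1}^m(d_{i-1}/d_i-1)a_i$ (using $l_g(\mathbb{N}_0)=-1$ and the convention $d_0=0$), and the inductive verification of $g(S_m)=(l_g(S_m)+1)/2$ all go through as you describe.
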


\subsection{Automorphism Group} This  subsection  briefly recalls few facts related to  automorphism groups of algebraic curves over finite fields. For a more detailed discussion on  this subject, see  \cite{GK1}, \cite{GK}, \cite[chapter 11]{HKT}.

Let $\aut(\mathcal{X})$ be the automorphism group of $\mathcal{X}$   and $\mathbb{G} \subseteq \aut(\mathcal{X})$  be a finite subgroup. For a rational point $P \in \mathcal{X}$, the stabilizer of $P$ in $\mathbb{G}$, denoted by $\mathbb{G}_{P}$, is the subgroup of $\mathbb{G}$ consisting of all elements fixing $P$. For a non-negative integer $i$, the $i$-th ramification group of $\mathcal{X}$ at $P$ is denoted by $\mathbb{G}_{P}^{(i)}$ and defined by
\[\mathbb{G}_{P}^{(i)}=\{\alpha \in \mathbb{G}_{P} \mbox{ : } \mbox{ } v_P(\alpha(t)-t)\geq i+1 \}\;,\]
where  $t$ is a local parameter at $P$. Here $\mathbb{G}_{P}^{(0)}=\mathbb{G}_{P}$ and $\mathbb{G}_{P}^{(1)}$ is the unique Sylow $p$-subgroup of $\mathbb{G}_{P}$. Moreover, $\mathbb{G}_{P}^{(1)}$ has a cyclic complement $H$ in $\mathbb{G}_{P}$, i.e.,
\begin{equation}\label{Gp}
\mathbb{G}_{P}=\mathbb{G}_{P}^{(1)}\rtimes H
\end{equation}
where $H$ is a cyclic group of order coprime to $p$.

The following result will be essential to determine the full group $\aut(\mathcal{X}_{n,r})$ of automorphisms  of the curves $\mathcal{X}_{n,r}$.

\medskip

\begin{theorem} \label{teorema korchmaros} {\rm (\cite{HKT}, Theorem 11.140)} Let $\mathcal{X}$ be a curve of genus $g \geq 2$ over $\mathbb{F}_{q}$, where $q$ is a prime power, and let $\mathbb{G}$ be an automorphism group of $\mathcal{X}$ such that $\mathcal{X}$ has a $\mathbb{F}_{q}$-rational point $P$ satisfying the condition $|\mathbb{G}_{P}^{(1)}| > 2g+1$. Then one of the following cases occurs:

\begin{enumerate}[\rm 1)]

\item $\mathbb{G} = \mathbb{G}_{P}$.

\item  $\mathcal{X}$ is birationally equivalent to one of the following curves:

\end{enumerate}

\begin{enumerate}[\rm (i)]
\item  the Hermitian curve $\mathbf{v}(Y^n + Y - X^{n + 1})$ with $n=q^t \geq 2$ and $g=\frac{1}{2}(n^2-n)$

\item  the DLS curve (the Deligne-Lusztig curve arising from the Suzuki group) $\mathbf{v}(X^{n_{0}}(X^n + X) - (Y^n + Y))$ with $p=2$, $q=n$, $n_{0}=2^r$, $r\geq 1$, $n=2n_{0}^2$ and $g=n_{0}(n-1)$

\item the DLR curve (the Deligne-Lusztig curve arising from the Ree group) $\mathbf{v}(Y^{n^2}- (1+(X^n - X)^{n-1})Y^n + (X^n - X)^{n-1}Y - X^n (X^n - X)^{n+3n_{0}}$ with $p=3$, $q=n$, $n_{0}= 3^r$, $n=3n_{0}^2$ and $g= \frac{3}{2}n_{0}(n-1)(n+n_{0}+1)$.
\end{enumerate}

\end{theorem}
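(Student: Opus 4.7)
The statement is a deep classification theorem cited from HKT, so my plan is not to reprove it from scratch but to indicate the structural route one would follow. The overall shape of the argument is a dichotomy: either the hypothesis $|\mathbb{G}_{P}^{(1)}|>2g+1$ already forces every element of $\mathbb{G}$ to fix $P$ (giving case (1)), or it allows $P$ to have a nontrivial $\mathbb{G}$-orbit, and then the combined action of the many large wild inertia subgroups at the orbit points is so constrained that $(\mathcal{X},\mathbb{G})$ must be one of the three extremal pairs (i)-(iii).

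First I would extract all the information available at the single point $P$. Using the semidirect decomposition $\mathbb{G}_P=\mathbb{G}_P^{(1)}\rtimes H$ recalled in (\ref{Gp}), the Hasse-Arf theorem, and the standard formula for the different exponent, one computes the contribution of $P$ to the Riemann-Hurwitz formula applied to the quotient morphism $\mathcal{X}\to \mathcal{X}/\mathbb{G}_P^{(1)}$. The hypothesis $|\mathbb{G}_P^{(1)}|>2g+1$ is considerably stronger than the generic Nakajima-Stichtenoth bound, and this forces the quotient to be rational while fixing the shape of the ramification filtration $\mathbb{G}_P^{(1)}\supseteq \mathbb{G}_P^{(2)}\supseteq \cdots$ very rigidly. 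In particular, one recovers strong information on $|\mathbb{G}_P^{(1)}|$, on $|H|$, and on the genus $g$ from the ramification data alone.

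Next, in the case $\mathbb{G}\neq \mathbb{G}_P$, I would pick $Q\neq P$ in the $\mathbb{G}$-orbit of $P$. Then $\mathbb{G}_Q^{(1)}$ is conjugate to $\mathbb{G}_P^{(1)}$, hence also of order $>2g+1$, and the two distinct wild subgroups together generate a subgroup of $\mathbb{G}$ that is large and ``almost doubly transitive'' on the orbit. Re-running Riemann-Hurwitz now for $\mathcal{X}\to \mathcal{X}/\mathbb{G}$ gives sharp upper bounds on the orbit structure and on $|\mathbb{G}|$ in terms of $g$.

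The main obstacle, and the reason this theorem is invoked rather than proved here, lies in identifying $\mathcal{X}$ itself at the end. One has to combine the Deuring-Shafarevich formula to control the $p$-rank of the relevant quotients, the classification of finite doubly transitive permutation groups, and explicit recognition of the groups $\mathrm{PGU}(3,n)$, the Suzuki group ${}^{2}B_2(n)$, and the Ree group ${}^{2}G_2(n)$ acting respectively on the Hermitian, DLS, and DLR curves. Matching the abstract pair $(\mathbb{G}_P^{(1)},H)$ isolated in Step 1 with the point stabilizers in these three groups, and invoking uniqueness results for curves that realize such stabilizers, yields precisely the three plane models listed in (i)-(iii). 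In an exposition I would carry out Steps 1 and 2 in detail and refer to \cite[Chapter 11]{HKT} for the final group-theoretic identification.
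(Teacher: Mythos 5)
The paper does not prove this statement at all: it is quoted verbatim as Theorem 11.140 of \cite{HKT} and used as a black box in the proof of Theorem \ref{main.aut}. So there is no internal argument to measure your proposal against; the only meaningful comparison is between your sketch and the actual proof in \cite[Chapter 11]{HKT}. Measured that way, your outline points in the right general direction: the dichotomy between $\mathbb{G}=\mathbb{G}_P$ and a nontrivial $\mathbb{G}$-orbit of $P$, the use of Riemann--Hurwitz together with the ramification filtration at $P$ to force the quotient $\mathcal{X}/\mathbb{G}_P^{(1)}$ to be rational, and the eventual recognition of $\mathrm{PGU}(3,n)$, the Suzuki group and the Ree group through their actions on the orbit of $P$ are all genuine ingredients of the argument in \cite{HKT}.

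As written, however, your text is a roadmap rather than a proof: every step that carries actual content is deferred. Concretely, you do not establish (a) why $|\mathbb{G}_P^{(1)}|>2g+1$ rigidifies the ramification filtration and makes the quotient rational, (b) how the interaction of two distinct conjugates of $\mathbb{G}_P^{(1)}$ bounds the orbit structure and $|\mathbb{G}|$, or (c) the recognition step, which rests on deep characterizations of the Hermitian, DLS and DLR curves in terms of their automorphism groups together with classification results for the relevant $2$-transitive (Zassenhaus-type) groups; none of these can be reconstructed from the outline. If the goal is to justify the statement within this paper, the authors' choice --- a bare citation of \cite{HKT}, Theorem 11.140 --- is the appropriate one, and your sketch adds exposition but no additional rigor; if the goal is to supply an independent proof, the proposal has essential gaps at each of the three stages listed above.
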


\subsection{The curves $\mathcal{X}_{n,r}$} In this subsection, we define the principal object of this study, namely the curves  $\mathcal{X}_{n,r}$,  and recall some of their basic properties. For additional   details, see \cite{c2}. 

  Fix  integers $n\geq 2$ and $r \in \{\lceil \frac{n}{2} \rceil,\ldots,n-1\}$, with $\gcd(n,r)=1$. Consider  the polynomial

\begin{equation}\label{eq:f_definition}
f_{r}(x):=T_n\left(x^{1+q^{r}}\right) \mod (x^{q^n}-x),
\end{equation}
 where  $T_n(x)=x+x^q+\ldots +x^{q^{n-1}}$, and define the curve 

\begin{equation} \label{eq Xr curve}
\mathcal{X}_{n,r}: T_n(y)=f_r(x).
\end{equation}

It is easy to check that the polynomial $f_r$ satisfies  $f_r(a) \in \F_{q}$ for all $a\in \F_{q^n}$, and that if $n>2$, then $f$ can written as
\begin{equation}\label{poly-explicit}
f_r(x)  = \sum_{i=0}^{n-r-1} (x^{1+q^r})^{q^i}+\sum_{i=0}^{r-1} (x^{1+q^{n-r}})^{q^i}.
\end{equation}

\begin{proposition} [\cite{c2}, Proposition 3.1] The following holds:
\begin{enumerate}[\rm 1)]

\item The curve $\mathcal{X}_{n,r}$ has degree $d=q^{n-1}+q^{r-1}$, genus $g=q^r(q^{n-1}-1)/2$
and $N=q^{2n-1}+1$ $\F_{q^n}$-rational points.

\item In the projective closure of $\mathcal{X}_{n,r}$, the point $P_{\infty}=(0:1:0)$ is the only singular point.

\end{enumerate}
\end{proposition}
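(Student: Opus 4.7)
The plan is to verify each of the four claims separately. The degree and rational-point count follow from the explicit form of $f_r$, the singular-point analysis is a derivative computation in two charts, and the genus calculation is the main technical step.

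\textbf{Degree and rational points.} From (\ref{poly-explicit}), the exponents appearing in $f_r$ are of the form $q^i + q^{i+r}$ or $q^i + q^{i+n-r}$. The hypothesis $r \geq \lceil n/2 \rceil$ forces $n-r-1 \leq r-1$, so the largest exponent is $q^{n-1} + q^{r-1}$, attained at $i = r-1$ in the second sum; this exceeds the $y$-degree $q^{n-1}$ of $T_n(y)$, so the total degree of the defining equation is $d = q^{n-1} + q^{r-1}$. For the point count, the observation made right after (\ref{eq:f_definition}) gives $f_r(a) \in \F_q$ for every $a \in \F_{q^n}$, and $T_n : \F_{q^n} \to \F_q$ is the surjective trace with kernel of size $q^{n-1}$; hence each $a \in \F_{q^n}$ yields exactly $q^{n-1}$ solutions $y \in \F_{q^n}$, producing $q^n \cdot q^{n-1} = q^{2n-1}$ affine rational points and $N = q^{2n-1} + 1$ after including $P_\infty$.

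\textbf{Singular point.} With $F(x, y) = T_n(y) - f_r(x)$, the $p$-linearized structure of $T_n$ forces $\partial F/\partial y = 1$, so no affine point is singular. Homogenizing with an auxiliary variable $z$ so that every monomial of the defining equation has total degree $d$, then setting $z = 0$, annihilates every $T_n(y)$ term (the smallest $z$-power is $z^{q^{r-1}}$) and leaves only the single monomial $x^d$ on the right, forcing $x = 0$. Hence $(0:1:0)$ is the only point at infinity. Dehomogenizing at $y = 1$ and inspecting the local equation in $(x, z)$, the unique lowest-degree contribution is $z^{q^{r-1}}$, since all right-hand monomials have total degree $d > q^{r-1}$ for $n \geq 3$; this gives multiplicity $q^{r-1} > 1$ at $P_\infty$, proving singularity.

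\textbf{Genus.} This is the main obstacle. I would apply Riemann--Hurwitz to the Galois cover $\pi : \mathcal{X}_{n,r} \to \mathbb{P}^1$ given by $x$. The corresponding function-field extension has degree $q^{n-1}$ with Galois group $\ker(T_n) \subset (\F_{q^n}, +)$, since translating $y$ by any kernel element preserves $T_n(y)$. Because $f_r$ is a polynomial, ramification lies only above $x = \infty$; a valuation analysis based on the single branch of the projective closure at $P_\infty$ shows $\pi$ is totally ramified there, with a unique place $P_\infty$ of index $q^{n-1}$. To compute the different exponent $d_\infty$, I would filter $\ker(T_n)$ through a flag of $\F_p$-subspaces so that $\pi$ becomes an iterated tower of $\Z/p\Z$ Artin--Schreier extensions, reading off each step's conductor from the pole order of an appropriate intermediate function. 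The delicate point is that $p$ divides the naive pole order $q^{n-1} + q^{r-1}$, so the single-step Artin--Schreier different formula cannot be applied directly; a change of variable that normalizes the defining equation to have pole order coprime to $p$ is needed at each level of the tower. Substituting the resulting $d_\infty$ into $2g - 2 = -2 q^{n-1} + d_\infty$ then yields the claimed value $g = q^r(q^{n-1} - 1)/2$.
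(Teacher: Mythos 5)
First, a remark on the comparison itself: the paper does not prove this proposition. It is imported verbatim from \cite{c2} (Proposition 3.1 there), so there is no in-paper argument to measure your attempt against; what follows judges the proposal on its own terms. Your treatment of the degree (the leading exponent $q^{n-1}+q^{r-1}$ of $f_r$, dominating the $y$-degree $q^{n-1}$, with no cancellation between the two sums since $n=2r$ is excluded by $\gcd(n,r)=1$), of the point count (fibers of the trace have size $q^{n-1}$ over each of the $q^n$ values of $x$, plus the single place over $P_\infty$), and of the singular locus ($\partial F/\partial y=1$ kills affine singularities; the tangent-cone computation at $(0:1:0)$ gives multiplicity $q^{r-1}$) is correct. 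You should, however, flag that the multiplicity argument needs $n>2$: for $n=2$, $r=1$ the curve is the smooth Hermitian curve and $P_\infty$ is not singular, so the proposition must be read with that case set aside.

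The genuine gap is the genus, which you yourself call the main technical step and then do not carry out. You correctly set up $x\colon \mathcal{X}_{n,r}\to\mathbb{P}^1$ as a degree-$q^{n-1}$ elementary abelian $p$-cover with group $\ker(T_n)$, ramified only over $x=\infty$ and totally ramified there, so that $2g-2=-2q^{n-1}+d_\infty$ and everything hinges on showing $d_\infty=q^{n+r-1}-q^r+2q^{n-1}-2$. You also correctly identify the obstruction: the pole order $q^{n-1}+q^{r-1}$ of $f_r$ is divisible by $p$, so the one-step Artin--Schreier conductor formula is inapplicable. But ``a change of variable that normalizes the defining equation to have pole order coprime to $p$ is needed at each level of the tower'' names the problem rather than solving it; producing those substitutions and tracking the resulting jumps is exactly the hard part (note that the paper has to construct the rather elaborate auxiliary functions $z_0$, $z$, $w$, $t$ in Section 3 merely to exhibit generators of $H(P_\infty)$ --- the same kind of $p$-power bookkeeping governs the conductors you need). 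Alternatives that would close the gap include the conductor--discriminant formula applied to the characters of $\ker(T_n)$, each of which cuts out a degree-$p$ Artin--Schreier subextension whose reduced pole order can be computed explicitly, or an adjunction/delta-invariant computation at the unique singular point of the degree-$d$ plane model. As it stands, the proposal establishes three of the four claims and reduces the fourth to a computation it does not perform.
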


\section{The Weierstrass semigroup $H(P_{\infty})$}

In this section, we  consider the  point $P_{\infty}=(0:1:0) \in \mathcal{X}_{n,r}$ and determine its  Weierstrass semigroup $H(P_{\infty})$.

Let $F_{n,r}=\mathbb{F}_{q^n}(x,y)$ be the function field of the curve $\mathcal{X}_{n,r}$ over $\mathbb{F}_{q^n}$, and let $v_{P_{\infty}}$ be the discrete valuation at $P_{\infty}$.

\begin{remark}
\begin{enumerate}[\rm 1)]
\item By equation (\ref{eq Xr curve}), the functions $x,y \in F_{n,r}$  satisfy
\begin{equation} \label{eq3}
y^{q^n}-y=(x^{q^n}-x)f_r'(x)=(x^{q^n}-x)(x^{q^r}+x^{q^{n-r}}).
\end{equation}
\item The notation 
$$z_0:=y^{q^{n-r}}-x^{q^{n-r}+1} \in F_{n,r}$$
 will be used  in what follows.
\end{enumerate}
\end{remark}

\begin{lemma} \label{lemma functions} For the functions $x,y,z\in F_{n,r}$, where
$$z := z_0^{q^{2r-n}}-x^{q^r + 1} + x^{q^{2r-n} - 1}y,$$ 
we have that
\begin{enumerate}[\rm 1)]

\item $div_\infty(x)=q^{n-1}P_{\infty}$

\item $div_\infty(y)=(q^{n-1}+q^{r-1})P_{\infty}$

\item $div_\infty(z)=(q^{2r-1}+q^{n-r-1})P_{\infty}$.

\end{enumerate}
\end{lemma}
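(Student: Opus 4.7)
The plan is to establish the pole orders of $x$ and $y$ via field-extension degrees, and then to compute $v_{P_\infty}(z)$ through a chain of Frobenius manipulations of the defining equation.

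For (1) and (2), I would use that $P_\infty$ is the unique place above the singular point $(0:1:0)$ on the smooth model, so the pole divisors of $x$ and $y$ are supported only at $P_\infty$. Then $-v_{P_\infty}(x) = [F_{n,r}:\mathbb{F}_{q^n}(x)]$ equals the $y$-degree of $T_n(y) - f_r(x)$, which is $q^{n-1}$; similarly $-v_{P_\infty}(y) = [F_{n,r}:\mathbb{F}_{q^n}(y)]$ equals the $x$-degree of $f_r(x)$, which by (\ref{poly-explicit}) and $r-1 > n-r-1$ (i.e., $2r > n$) equals $q^{n-1}+q^{r-1}$.

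For (3), I would first derive a clean formula for $z^{q^{n-r}}$. Using $(a-b)^{q^k} = a^{q^k}-b^{q^k}$ in characteristic $p$, one computes $z_0^{q^r} = y^{q^n} - x^{q^n+q^r}$, and substituting from (\ref{eq3}) yields
\[
z_0^{q^r} = y + x^{q^n+q^{n-r}} - x^{q^r+1} - x^{q^{n-r}+1}.
\]
Raising $z$ to the $q^{n-r}$-th power (noting $q^{2r-n}\cdot q^{n-r} = q^r$) and substituting both this expression for $z_0^{q^r}$ and $y^{q^{n-r}} = z_0 + x^{q^{n-r}+1}$, the $x^{q^n+q^{n-r}}$ and $x^{q^r+1}$ terms cancel pairwise, yielding
\[
z^{q^{n-r}} = y - x^{q^{n-r}+1} + x^{q^r-q^{n-r}} z_0.
\]

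To conclude, I would extract $v_{P_\infty}(z_0) = -(q^{2n-r-1}+q^{2n-2r-1})$ by isolating the unique dominant term $x^{q^n+q^{n-r}}$ on the right-hand side of $z_0^{q^r}$. Then in the three-term formula above, $x^{q^r-q^{n-r}} z_0$ has valuation $-(q^{n+r-1}+q^{2n-2r-1})$, which is strictly less than the valuations of $y$ and $x^{q^{n-r}+1}$ (once more thanks to $2r > n$). Hence $v_{P_\infty}(z^{q^{n-r}}) = -(q^{n+r-1}+q^{2n-2r-1})$, giving $v_{P_\infty}(z) = -(q^{2r-1}+q^{n-r-1})$; since $z \in \mathbb{F}_{q^n}[x,y]$, the pole divisor of $z$ is $(q^{2r-1}+q^{n-r-1})P_\infty$. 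The main obstacle is bookkeeping: one must carry out the Frobenius expansions correctly, verify the pairwise cancellations that collapse $z^{q^{n-r}}$ to three terms, and check that the three resulting valuations are strictly ordered, each separation ultimately resting on $2r > n$ (guaranteed by $r \geq \lceil n/2 \rceil$ and $\gcd(n,r)=1$).
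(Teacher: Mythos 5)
Your proof is correct, and for part 3) it takes a genuinely different (though closely related) computational route. The paper also argues by raising $z$ to a Frobenius power, invoking the relation \eqref{eq3}, cancelling, and applying the strict triangle inequality — but it computes $z^{q^n}$ directly, arriving at a six-term expression whose unique dominant term is the pure monomial $x^{q^{2r}+q^{n-r}}$, so that no auxiliary valuation is needed. You instead compute $z^{q^{n-r}}$, which collapses to the three-term identity $z^{q^{n-r}} = y - x^{q^{n-r}+1} + x^{q^r-q^{n-r}}z_0$ (I verified the cancellations; they are right), at the cost of a separate sub-computation of $v_{P_\infty}(z_0)=-(q^{2n-r-1}+q^{2n-2r-1})$ from $z_0^{q^r}=y+x^{q^n+q^{n-r}}-x^{q^r+1}-x^{q^{n-r}+1}$. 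Your version involves shorter expressions and makes the role of $z_0$ more transparent, while the paper's single larger expansion avoids the intermediate valuation; both hinge on the same inequalities, all reducible to $2r>n$ and $r<n$. For parts 1) and 2) the paper simply defers to \cite[Lemma 3.6]{c2}; your degree argument ($-v_{P_\infty}(x)=[F_{n,r}:\mathbb{F}_{q^n}(x)]=\deg_y F$ and $-v_{P_\infty}(y)=[F_{n,r}:\mathbb{F}_{q^n}(y)]=\deg_x f_r=q^{n-1}+q^{r-1}$) is the standard one, but note that it presupposes that $P_\infty$ is the \emph{unique} place of $F_{n,r}$ lying over the point at infinity of the plane model — this is exactly the content imported from \cite{c2} and is not automatic from $(0:1:0)$ being the only singular point, so it deserves an explicit citation rather than a bare assertion.
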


\begin{proof}
The proof of assertions 1 and 2 follows similarly to \cite[Lemma 3.6.]{c2}. To compute  $v_{P_{\infty}}(z)$, we  use
 the function $z^{q^n}$.  Making use of equation (\ref{eq3}), we have

$\begin{array}{ll}
  z^{q^n} & = z_0^{q^{2r}}-x^{q^{n+r} + q^n} + x^{q^{2r} - q^n}((x^{q^n}-x)(x^{q^r}+x^{q^{n-r}})+y) \\
   & = y^{q^{n+r}}-x^{q^{n+r}+q^{2r}} -x^{q^{n+r} + q^n} + x^{q^{2r} - q^n}((x^{q^n}-x)(x^{q^r}+x^{q^{n-r}})+y) \\
   & = ((x^{q^n}-x)(x^{q^r}+x^{q^{n-r}})+y)^{q^r} -x^{q^{n+r}+q^{2r}} -x^{q^{n+r} + q^n}  \\
   & \mbox{ } \mbox{ } + x^{q^{2r} - q^n}((x^{q^n}-x)(x^{q^r}+x^{q^{n-r}})+y) \\
   & = (x^{q^n+q^r}+x^{q^n+q^{n-r}}-x^{q^r+1}-x^{q^{n-r}+1}+y)^{q^r} -x^{q^{n+r}+q^{2r}} -x^{q^{n+r} + q^n}  \\
   & \mbox{ } \mbox{ } + x^{q^{2r} - q^n}( x^{q^n+q^r} + x^{q^n+q^{n-r}} - x^{q^r+1} - x^{q^{n-r}+1} +y) \\
   & = \cancel{x^{q^{n+r}+q^{2r}}}+ \cancel{x^{q^{n+r}+q^{n}}}- \cancel{x^{q^{2r}+q^r}}-x^{q^{n}+q^r}+y^{q^r} - \cancel{x^{q^{n+r}+q^{2r}}} - \cancel{x^{q^{n+r} + q^n}}  \\
   &\mbox{ } \mbox{ } + \cancel{x^{q^{2r}+q^r}} + x^{q^{2r}+q^{n-r}} - x^{q^{2r} - q^n+q^r+1} - x^{q^{2r} - q^n+q^{n-r}+1} + x^{q^{2r} - q^n}y. 
 \end{array}$
 
Thus $z^{q^n} = x^{q^{2r}+q^{n-r}} - x^{q^{2r} - q^n+q^r+1} - x^{q^{2r} - q^n+q^{n-r}+1} + x^{q^{2r} - q^n}y -x^{q^{n}+q^r}+y^{q^r}$.
And  triangle inequality gives us
$$
v_{P_{\infty}}(z^{q^n}) = v_{P_{\infty}}(x^{q^{2r}+q^{n-r}}) = -q^{n-1}(q^{2r}+q^{n-r}).
$$

Therefore, $div_{\infty}(z) = (q^{2r-1}+q^{n-r-1})P_{\infty}$.

\end{proof}

\begin{remark}
Note that, if $gdc(n,r)=1$, then there exists positive integers $\alpha$ and $\beta$ such that $(n-r)\alpha-\beta n=1$.
\end{remark}

\begin{proposition}\label{proposition functions}
Let $\alpha$ and $\beta$ be positive integers such that $(n-r)\alpha-\beta n=1$, and consider the following functions in $F_{n,r}$:
\begin{equation}
w:=\sum\limits_{i=0}^{\alpha-1}{z_0}^{q^{(n-r)i}}-\sum\limits_{i=1}^{\beta}{(x^{q^{n-r}+1}+x^{q^n+q^{n-r}})}^{q^{n(\beta-i)+1}}
\end{equation}
and
\begin{equation}
t:=x^{q^{2r-n+1}-q}w+{z}^q+x^{q^{2r-n+1}-q^{2r-n}-q+1}z.
\end{equation}
Then $div_{\infty}(w)=(q^n+q^{n-r})P_{\infty}$, and  $div_{\infty}(t)=(q^{2r}-q^n+q^r+1)P_{\infty}.$
\end{proposition}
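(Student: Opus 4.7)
Both claims are valuation computations at $P_\infty$, and the approach mirrors the proof of Lemma \ref{lemma functions}: expand each function into monomials in $x$ and $y$, repeatedly use equation (\ref{eq3}) to reduce high powers of $y$ to expressions in $x$ and $y$ of controlled pole order, and then apply the triangle inequality once the leading-order monomials have been identified as canceling. Both $w$ and $t$ are built as sums of pieces whose individual pole orders at $P_\infty$ strictly exceed the claimed answers, so the proposition is really asserting precise cancellation of the leading contributions.

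\textbf{Plan for $v_{P_\infty}(w) = -(q^n + q^{n-r})$.} The first step is to establish an explicit polynomial identity of the form $z_0^{q^n} - z_0 = F(x)$ by substituting the defining expression for $z_0$ and applying equation (\ref{eq3}) to $y^{q^n}$. Iterating Frobenius then gives, for each $i$, a formula for $z_0^{q^{(n-r)i}}$ as $z_0$ plus explicit polynomials in $x$ whose pole orders grow with $i$. The second sum in the definition of $w$ is precisely the collection of $x$-monomials needed to cancel every such correction whose pole order exceeds $q^n + q^{n-r}$. The Bezout identity $(n-r)\alpha - \beta n = 1$ is the arithmetic condition that makes the telescope close: it ensures that the topmost Frobenius shift $q^{(n-r)(\alpha-1)}$ appearing on $z_0$ aligns exactly with the topmost Frobenius shift $q^{n\beta - n + 1}$ appearing on the $x$-polynomial being subtracted, so that the leading monomials of the two sums pair up and cancel. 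After the cancellation, the surviving contributions have pole order exactly $q^n + q^{n-r}$, and one concludes via the triangle inequality.

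\textbf{Plan for $v_{P_\infty}(t) = -(q^{2r} - q^n + q^r + 1)$.} Using Lemma \ref{lemma functions} one checks immediately that $v_{P_\infty}(z^q) = v_{P_\infty}(x^{q^{2r-n+1}-q}w) = -(q^{2r} + q^{n-r})$, while the third summand $x^{q^{2r-n+1}-q^{2r-n}-q+1}z$ has pole order $-(q^{2r}-q^n+q^{n-1}+q^{n-r-1})$; all three are strictly larger (in absolute value) than the claimed $q^{2r}-q^n+q^r+1$. The claim then requires verifying two layers of cancellation: the prefactor $x^{q^{2r-n+1}-q}$ attached to $w$ is engineered so that the leading monomial of $x^{q^{2r-n+1}-q}w$ exactly cancels the leading monomial of $z^q$; the next-largest surviving contributions then cancel against the leading part of the third summand. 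Tracking these cancellations again uses equation (\ref{eq3}) together with the explicit expansion of $z^{q^n}$ already worked out in the proof of Lemma \ref{lemma functions}. What remains is a single controllable monomial (or a short, non-canceling combination) of pole order $q^{2r}-q^n+q^r+1$.

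\textbf{Main obstacle.} Conceptually both arguments are telescoping/cancellation computations, but the real obstacle is monomial bookkeeping: one must enumerate every monomial that survives cancellation and verify, exponent by exponent, that the pole orders collapse to the advertised values. The structural key in the $w$-computation is the Bezout relation $(n-r)\alpha - \beta n = 1$, without which the Frobenius exponents in the two sums fail to align and no telescoping occurs. In the $t$-computation the difficulty is compounded by the three-way interaction: each surviving monomial from the cancellation producing $w$ must be tracked through multiplication by $x^{q^{2r-n+1}-q}$ and paired against contributions from both $z^q$ and the third summand, so the cancellation is two-step rather than single-step.
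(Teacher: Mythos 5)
Your outline captures the right flavor (telescoping governed by the Bezout relation, cancellation of leading contributions, triangle inequality), but as written it has a genuine gap in the $w$-computation. You propose to express each $z_0^{q^{(n-r)i}}$ as ``$z_0$ plus explicit polynomials in $x$'' by iterating Frobenius on the identity $z_0^{q^n}-z_0=F(x)$. This is not possible: that identity only controls Frobenius shifts by multiples of $n$, whereas $z_0^{q^{(n-r)i}}=y^{q^{(i+1)(n-r)}}-x^{q^{(i+1)(n-r)}+q^{i(n-r)}}$ involves $y^{q^{(i+1)(n-r)}}$ with $(i+1)(n-r)\not\equiv n-r \pmod n$ in general, and equation (\ref{eq3}) gives no way to reduce such a power of $y$ back to $y^{q^{n-r}}$ plus a polynomial in $x$. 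The device the paper uses to get around this is to apply the operator $f\mapsto f^{q^n}-f$ to the \emph{whole} function $w$: this turns the first sum into $\sum_{i=0}^{\alpha-1}F^{q^{(n-r)i}}$, which telescopes because $F=G-G^{q^{n-r}}$ with $G=x^{q^{n-r}+1}-x^{q^n+q^{n-r}}$, and the Bezout relation closes the telescope against the second sum. One then reads off $v_{P_\infty}(w)$ from $v_{P_\infty}(w^{q^n})=q^n\,v_{P_\infty}(w)$ and the unique dominant monomial of the resulting polynomial identity $w^{q^n}=w+G-G^{q^{n\beta+1}}$ (with a twist by $q$). Without this ``$q^n$-th power minus identity'' trick, your plan has no mechanism to produce the explicit $x$-polynomials you intend to cancel.

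A similar issue affects the $t$-computation. You propose to cancel the leading terms of $x^{q^{2r-n+1}-q}w$ and $z^q$ directly; but since these two functions have \emph{equal} valuation $-(q^{2r}+q^{n-r})$, showing that their sum has strictly larger valuation requires matching leading coefficients in a local expansion at $P_\infty$, information that Lemma \ref{lemma functions} does not provide. The paper again sidesteps this by computing $t^{q^n}$ explicitly (substituting the known expansions of $w^{q^n}$ and $z^{q^n}$), obtaining a sum of monomials in $x$, $y$ and $w$ whose valuations are pairwise comparable with a unique minimum, so the triangle inequality applies cleanly. Finally, even granting the strategy, your write-up defers the entire monomial bookkeeping --- which here is not a routine afterthought but the actual content of the proof --- so as it stands the proposal is a plan rather than a proof, and the plan itself needs the correction above before it can be executed.
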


\begin{proof} We begin by computing $w^{q^n}-w$.  Note that
$$\Big(\sum\limits_{i=1}^{\beta}{(x^{q^{n-r}+1}+x^{q^n+q^{n-r}})}^{q^{n(\beta-i)+1}}\Big)^{q^n}-\sum\limits_{i=1}^{\beta}{(x^{q^{n-r}+1}+x^{q^n+q^{n-r}})}^{q^{n(\beta-i)+1}}=$$
$$(x^{q^{n-r}+1}+x^{q^n+q^{n-r}})^{q^{n\beta+1}}-(x^{q^{n-r}+1}+x^{q^n+q^{n-r}})^{q}.$$

Since

$$\sum\limits_{i=0}^{\alpha-1}(z_0^{q^n}-z_0)^{q^{(n-r)i}}=(x^{q^{n-r}+1}-x^{q^{n}+q^{n-r}})-(x^{q^{n-r}+1}-x^{q^{n}+q^{n-r}})^{q^{n\beta+1}},$$
it follows that
\begin{equation}\label{ordemteta}
w^{q^n}=w + x^{q^{n-r}+1}-x^{q^{n}+q^{n-r}}+x^{q^{n-r+1}+q}-x^{q^{n+1}+q^{n-r+1}}.
\end{equation}

Therefore, by triangle inequality, $v_{P_{\infty}}(w^{q^n}) = v_{P_{\infty}}(x^{q^{n+1}+q^{n-r+1}})$. Thus $q^n v_{P_{\infty}}(w) =-(q^{n+1}+q^{n-r+1})q^{n-1}$, and then $div_{\infty}(w)=(q^{n}+q^{n-r})P_{\infty}$. Now to find  $div_{\infty}(t)$,  we see that

\medskip

\small{
$\begin{array}{cl}
t^{q^n}&=x^{q^{2r+1}-q^{n+1}}w^{q^n}+z^{q^{n+1}}+x^{q^{2r+1}-q^{2r}-q^{n+1}+q^n}z_{r}^{q^n}\\
& =x^{q^{2r+1}-q^{n+1}}(w+x^{q^{n-r}+1}-x^{q^{n}+q^{n-r}}+x^{q^{n-r+1}+q}-x^{q^{n+1}+q^{n-r+1}}) \\
& +(x^{q^{2r}+q^{n-r}} - x^{q^{2r} - q^n+q^r+1} - x^{q^{2r} - q^n+q^{n-r}+1} + x^{q^{2r} - q^n}y -x^{q^{n}+q^r}+y^{q^r})^q\\
& + x^{q^{2r+1}-q^{2r}-q^{n+1}+q^n}(x^{q^{2r}+q^{n-r}} - x^{q^{2r} - q^n+q^r+1} - x^{q^{2r} - q^n+q^{n-r}+1} + x^{q^{2r} - q^n}y\\
& -x^{q^{n}+q^r}+y^{q^r})\\
& = x^{q^{2r+1}-q^{n+1}} w + \cancel{x^{q^{2r+1}-q^{n+1}+q^{n-r}+1}} - \cancel{x^{q^{2r+1}-q^{n+1} +q^{n}+q^{n-r}}}+ \cancel{x^{q^{2r+1}-q^{n+1}+q^{n-r+1}+q}}\\
&\cancel{-x^{q^{2r+1}+q^{n-r+1}}} +\cancel{x^{q^{2r+1}+q^{n-r+1}}} - x^{q^{2r+1} - q^{n+1}+q^{r+1}+q} - \cancel{x^{q^{2r+1} - q^{n+1}+q^{n-r+1}+q}} \\
& + x^{q^{2r+1} - q^{n+1}}y^q -x^{q^{n+1}+q^{r+1}}+y^{q^{r+1}}+ \cancel{x^{q^{2r+1}-q^{n+1}+q^n+q^{n-r}}} - x^{q^{2r+1}-q^{n+1}+q^r+1} \\
&- \cancel{x^{q^{2r+1}-q^{n+1}+q^{n-r}+1}} + x^{q^{2r+1}-q^{n+1}}y -x^{q^{2r+1}-q^{2r}-q^{n+1}+q^n+q^{n}+q^r}+ x^{q^{2r+1}-q^{2r}-q^{n+1}+q^n}y^{q^r}.\\
\end{array}$
}

Then,

$t^{q^n}= x^{q^{2r+1}-q^{n+1}} w  - x^{q^{2r+1} - q^{n+1}+q^{r+1}+q} + x^{q^{2r+1} - q^{n+1}}y^q -x^{q^{n+1}+q^{r+1}}+y^{q^{r+1}}$
$$- x^{q^{2r+1}-q^{n+1}+q^r+1} + x^{q^{2r+1}-q^{n+1}}y -x^{q^{2r+1}-q^{2r}-q^{n+1}+q^n+q^{n}+q^r}+ x^{q^{2r+1}-q^{2r}-q^{n+1}+q^n}y^{q^r}.$$

Using triangle inequality, we have $v_{P_{\infty}}(t^{q^n})= v_{P_{\infty}}(x^{q^{2r+1} - q^{n+1}+q^{r+1}+q})$ from which it follows that $div_{\infty}(t)=(q^{2r}-q^n+q^r+1)P_{\infty}$.
\end{proof}

\begin{corollary} The curve $\mathcal{X}_{n,r}$ has a plane model given by
\begin{equation}\label{newmodel}
y^{q^{n-1}}+\cdots+y^q+y=x^{q^{n-r}+1}-x^{q^{n}+q^{n-r}}.
\end{equation}
\end{corollary}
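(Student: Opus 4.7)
The plan is to exhibit $\tilde y\in F_{n,r}$ such that $T_n(\tilde y)=g(x)$---where $g(x):=x^{q^{n-r}+1}-x^{q^n+q^{n-r}}$---and $\mathbb{F}_{q^n}(x,\tilde y)=F_{n,r}$; the pair $(x,\tilde y)$ will then realize the claimed plane model.

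The natural candidate for $\tilde y$ is (a correction of) the function $w$ from Proposition~\ref{proposition functions}, whose pole divisor $(q^n+q^{n-r})P_\infty$ is exactly the pole order forced on the second coordinate by the equation $T_n(Y)=g(X)$ at $X=x$ (matching $q^{n-1}(q^n+q^{n-r})$ on both sides). From the identity $w^{q^n}-w=g(x)+g(x)^q$ established in the proof of Proposition~\ref{proposition functions} together with the general relation $T_n(u)^q-T_n(u)=u^{q^n}-u$, one sees that $U:=T_n(w)-g(x)\in F_{n,r}$ satisfies $U^q-U=2g(x)$. I would then construct $v\in F_{n,r}$ with $v^{q^n}-v=2g(x)$; then $T_n(v)-U$ is fixed by the $q$-Frobenius and hence a constant in $\mathbb{F}_q$, and after a suitable constant shift, $\tilde y:=w-v-c$ satisfies $T_n(\tilde y)=g(x)$.

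The explicit construction of $v$ is the central technical step, and is where I expect the main obstacle to lie. It proceeds by the same telescoping scheme used to define $w$: starting from the identity $z_0^{q^n}-z_0=g(x)-g(x)^{q^{n-r}}$ (derived from~\eqref{eq3} and $z_0=y^{q^{n-r}}-x^{q^{n-r}+1}$), I would use the Bezout relation $(n-r)\alpha-\beta n=1$ to combine Frobenius iterates $z_0^{q^{(n-r)i}}$, adjusted by explicit polynomial corrections in $x$, so that the resulting Artin-Schreier data produces the required $v$. In essence this is parallel to the derivation of Proposition~\ref{proposition functions}, but carried out so as to land in the kernel of $\wp_1$ applied to $2g(x)$ rather than $g(x)+g(x)^q$.

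Finally, the equality $\mathbb{F}_{q^n}(x,\tilde y)=F_{n,r}$ follows by a degree count: $\tilde y$ satisfies the degree-$q^{n-1}$ polynomial $T_n(Y)-g(x)\in\mathbb{F}_{q^n}(x)[Y]$, so $[\mathbb{F}_{q^n}(x,\tilde y):\mathbb{F}_{q^n}(x)]\le q^{n-1}=[F_{n,r}:\mathbb{F}_{q^n}(x)]$; conversely, the pole order $q^n+q^{n-r}$ of $\tilde y$ at $P_\infty$ is incompatible with $\tilde y\in\mathbb{F}_{q^n}(x)$, since the ramification index of $P_\infty$ over $\infty\in\mathbb{P}^1$ is $q^{n-1}$, which does not divide $q^n+q^{n-r}$ when $r\ge 2$. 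This forces equality, and the remaining border case $r=1$, $n=2$ is handled directly.
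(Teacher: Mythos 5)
Your proposal diverges from the paper at the crux and leaves two genuine gaps. The paper's proof is much shorter: writing $g(x)=x^{q^{n-r}+1}-x^{q^{n}+q^{n-r}}$, it observes from \eqref{ordemteta} that $T_n(w)$ is a root of $G(Z)=Z^q-Z-g(x)-g(x)^q$, factors $G(Z)=\prod_{\lambda\in\mathbb{F}_q}\bigl(Z-g(x)-\lambda\bigr)$, concludes $T_n(w)=g(x)+\lambda$ for some $\lambda\in\mathbb{F}_q$, and removes $\lambda$ by replacing $w$ with $w-a$ where $T_n(a)=\lambda$; no auxiliary function $v$ is ever needed. Your computation that $U:=T_n(w)-g(x)$ satisfies $U^q-U=2g(x)$ is the correct consequence of \eqref{ordemteta} as printed, and it is incompatible with that factorization unless $p=2$: indeed $\prod_{\lambda}\bigl(Z-g-\lambda\bigr)=Z^q-Z-g^q+g$, which differs from $G(Z)$ by $2g$. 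So you have in effect detected a sign inconsistency in the chain from the definition of $w$ through \eqref{ordemteta} to $G$; the intended resolution is to adjust the sign (in the second sum defining $w$) so that $T_n(w)-g(x)$ is genuinely constant, not to manufacture a new function $v$ with $v^{q^n}-v=2g(x)$. As it stands, that construction of $v$ is the load-bearing step of your argument, it is only a plan (you flag it yourself as the expected obstacle), and there is no reason to believe it is any easier than the construction of $w$ itself; without it the proof does not close. (In characteristic $2$ your $U$ is constant and your argument collapses to the paper's.)

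The second gap is in your closing degree count. Showing $\tilde y\notin\mathbb{F}_{q^n}(x)$ via the pole order $q^n+q^{n-r}$ not being divisible by the ramification index $q^{n-1}$ only gives $[\mathbb{F}_{q^n}(x,\tilde y):\mathbb{F}_{q^n}(x)]>1$. Since $T_n(Y)-g(x)$ is an additive polynomial, its irreducible factors over $\mathbb{F}_{q^n}(x)$ may have any $p$-power degree dividing $q^{n-1}$, so non-rationality of $\tilde y$ does not force the degree to be $q^{n-1}$; even the sharpened form of your argument (total ramification of $P_\infty$ plus integrality of $v(\tilde y)$ at the intermediate place) only yields that $q^{r-1}$ divides the degree, which is strictly smaller than $q^{n-1}$. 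This is precisely why the paper finishes by invoking the irreducibility of $T_n(Y)-g(X)$ from \cite{c3}; you need that theorem, or an equivalent irreducibility argument, at this point as well.
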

\begin{proof} Consider the polynomial in $\mathbb{F}_{q^n}(x)[Z]$ given by
$$G(Z) =Z^q-Z-x^{q^{n-r}+1}+x^{q^{n}+q^{n-r}}-x^{q^{n-r+1}+q}+x^{q^{n+1}+q^{n-r+1}}.$$It follows from \eqref{ordemteta} that $G(T_n(w))=0$. Therefore, since $G(Z)$ may be written as $\prod\limits_{\lambda \in \mathbb{F}_{q}}(Z-(x^{q^{n-r}+1}-x^{q^{n}+q^{n-r}})-\lambda)$, we have that
$$T_n(w)=x^{q^{n-r}+1}-x^{q^{n}+q^{n-r}}+\lambda$$ for some $\lambda \in \mathbb{F}_{q}.$
Letting $a\in \mathbb{F}_{q^n}$ be such that $T_n(a)=\lambda$ and considering the function $\tilde{w}=w-a$, we obtain

$$T_n(\tilde{w})=x^{q^{n-r}+1}-x^{q^{n}+q^{n-r}}.$$

Since  \cite[Thorem 2.5]{c3} implies that \eqref{newmodel} is irreducible, the result follows.
\end{proof}

\begin{theorem}
Let $H(P_{\infty})$ be the Weierstrass semigroup at $P_{\infty}$. Then
$$
H(P_{\infty}) = \langle q^{n-1}, q^{n-1}+q^{r-1}, q^n+q^{n-r}, q^{2r-1}+q^{n-r-1}, q^{2r}-q^n+q^r+1 \rangle.
$$
Moreover, $H(P_{\infty})$ is a telescopic semigroup and, in particular, symmetric.
\end{theorem}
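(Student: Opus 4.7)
The plan is to realize each of the five candidate generators as the $P_\infty$-pole order of an explicit function, which gives the inclusion $S\subseteq H(P_\infty)$ for the semigroup $S$ they generate; then to show via Lemma \ref{lemma telescopic} that $S$ already has $g(\mathcal{X}_{n,r})$ gaps, forcing both equality and symmetry. The first half is essentially free: Lemma \ref{lemma functions} and Proposition \ref{proposition functions} already supply functions $x$, $y$, $w$, $z$, $t$ whose pole divisors at $P_\infty$ are exactly $q^{n-1}$, $q^{n-1}+q^{r-1}$, $q^n+q^{n-r}$, $q^{2r-1}+q^{n-r-1}$, and $q^{2r}-q^n+q^r+1$, respectively.

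The second half begins with verifying that $(a_1,\ldots,a_5)$, in the order listed in the statement, is telescopic. Using $\gcd(n,r)=1$ together with $r\ge\lceil n/2\rceil$ (so that $2r\ge n+1$ and $n-r\le r-1$), a direct $p$-adic computation gives
\[
d_1=q^{n-1},\quad d_2=q^{r-1},\quad d_3=q^{n-r},\quad d_4=q^{n-r-1},\quad d_5=1.
\]
After dividing by $d_{i-1}$, the telescopic requirement reduces to the explicit non-negative decompositions
\[
q^r+1=(q^{2r-n}-1)\,q^{n-r}+(q^{n-r}+1),
\]
\[
q^{3r-n}+1=q\,(q^{2r-n}-1)\,q^{r-1}+(q^r+1),
\]
\[
q^{2r}-q^n+q^r+1=\bigl(q^r+1-q^{n-r}-q^{2r-n}\bigr)\,q^r+(q^{3r-n}+1),
\]
for $i=3,4,5$ respectively (the case $i=2$ is trivial since $a_1/d_1=1$). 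The only delicate point is the non-negativity of the coefficient $q^r+1-q^{n-r}-q^{2r-n}$ in the last identity, which follows from rewriting $q^r-q^{n-r}=q^{n-r}(q^{2r-n}-1)\ge q^{2r-n}-1$.

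With telescopicity in hand, Lemma \ref{lemma telescopic} immediately yields symmetry of $S$ together with the identity $l_g(S)=\sum_{i=1}^{5}(d_{i-1}/d_i-1)\,a_i$. Expanding the five summands, the eight monomials $q^{n-1}$, $q^{2n-r-1}$, $q^{r-1}$, $q^{2r-1}$, $q^n$, $q^{n-r}$, $q^{2r}$, $q^{n-r-1}$ cancel in pairs, leaving $l_g(S)=q^{n+r-1}-q^r-1$. Hence $g(S)=\tfrac12(l_g(S)+1)=q^r(q^{n-1}-1)/2=g(\mathcal{X}_{n,r})$, so the inclusion $S\subseteq H(P_\infty)$ must be an equality. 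The principal obstacles are the $i=5$ telescopic decomposition (together with its non-negativity check) and the cancellation-heavy evaluation of $l_g(S)$; everything else is careful but routine bookkeeping with powers of $q$.
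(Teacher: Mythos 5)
Your proposal is correct and follows essentially the same route as the paper: the same five functions $x,y,w,z,t$ give the inclusion $S\subseteq H(P_\infty)$, the same $d_i$ and the same telescopic decompositions (your $q(q^{2r-n}-1)q^{r-1}$ is the paper's $(q^{2r-n+1}-q)q^{r-1}$) establish telescopicity, and the genus count via Lemma \ref{lemma telescopic} forces equality. The only differences are cosmetic: you make explicit the non-negativity check on $q^r+1-q^{n-r}-q^{2r-n}$ and the cancellation yielding $l_g(S)=q^{n+r-1}-q^r-1$, both of which the paper leaves implicit.
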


\begin{proof}
Let

 $\cdot$ $a_1 = q^{n-1}$;

 $\cdot$ $a_2 = q^{n-1}+q^{r-1}$;

 $\cdot$ $a_3 = q^n+q^{n-r}$;

 $\cdot$ $a_4 = q^{2r-1}+q^{n-r-1}$;

 $\cdot$ $a_5 = q^{2r}-q^n+q^r+1$; and

 $\cdot$ $S = \langle a_1, a_2, a_3, a_4, a_5 \rangle$.

 By Lemma \ref{lemma functions} and Proposition \ref{proposition functions}, it  follows that $S \subseteq H(P_{\infty})$.
 To prove the equality, we first show that $S$ is telescopic. Using the notations of Definition \ref{telescopic}, we have

 $\cdot$ $d_1 = q^{n-1}$ and $S_1 = \langle 1 \rangle$;

 $\cdot$ $d_2 = q^{r-1}$ and $S_2 = \langle q^{n-r}, q^{n-r}+1 \rangle$;

 $\cdot$ $d_3 = q^{n-r}$ and $S_3 = \langle q^{r-1}, q^{r-1} + q^{2r-n-1}, q^{r}+1 \rangle$;

 $\cdot$ $d_4 = q^{n-r-1}$ and $S_4 = \langle q^{r}, q^{r}+q^{2r-n}, q^{r+1}+q, q^{3r-n}+1 \rangle$;

 $\cdot$ $d_5=1$ and $S_5=S$.

 Note that

 \medskip

 $\cdot$ $\dfrac{a_{2}}{d_2} = q^{n-r}+1 \in S_1$;

 \medskip

 $\cdot$ $\dfrac{a_{3}}{d_3} = q^{r}+1 = (q^{2r-n}-1)q^{n-r}+(q^{n-r}+1) \in S_2$;

 \medskip

 $\cdot$ $\dfrac{a_4}{d_4} = q^{3r-n}+1 = (q^{2r-n+1}-q)q^{r-1}+(q^r+1) \in S_3$;

 \medskip

 $\cdot$ $\dfrac{a_5}{d_5}=q^{2r}-q^n+q^r+1 = (q^r - q^{n-r} - q^{2r-n} + 1)q^r + (q^{3r-n}+1) \in S_4$.

 \medskip

 Therefore, $S$ is telescopic and, by Lemma \ref{lemma telescopic}, symmetric. To complete the proof, we must show that $g(S)$ is equal to $g=q^r(q^{n-1}-1)/2$. Using the formula for the genus of a telescopic semigroup given in Lemma \ref{lemma telescopic},
$$
\displaystyle g(S_5)= (l_{g}(S_5) + 1)/2 = \left ( \sum_{i=1}^{5} (\frac{d_{i-1}}{d_i}-1)a_i + 1 \right ),
$$
and then  $g(S)=q^r(q^{n-1}-1)/2$, which completes the proof.
\end{proof}

\begin{corollary}
The curves $\mathcal{X}_{n,r}$ are Castle curves.
\end{corollary}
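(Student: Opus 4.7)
The plan is to verify the two defining conditions of a Castle curve for $\mathcal{X}_{n,r}$ with $\mathbb{G}$-rational point $P_\infty$ over $\mathbb{F}_{q^n}$: symmetry of $H(P_\infty)$, and the point-count equality $\#\mathcal{X}_{n,r}(\mathbb{F}_{q^n}) = m_2 \cdot q^n + 1$, where $m_2$ is the smallest positive element of $H(P_\infty)$.

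The first condition is immediate from the preceding theorem: $H(P_\infty)$ was shown to be telescopic, and Lemma \ref{lemma telescopic} guarantees that telescopic semigroups are symmetric. So no further work is required on this point.

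For the second condition, I would first identify $m_2$. Among the generators $a_1=q^{n-1}$, $a_2=q^{n-1}+q^{r-1}$, $a_3=q^n+q^{n-r}$, $a_4=q^{2r-1}+q^{n-r-1}$, and $a_5=q^{2r}-q^n+q^r+1$, the generator $a_1=q^{n-1}$ is clearly the smallest: $a_2$ and $a_3$ are manifestly larger, and for $a_4$ one uses $r \geq \lceil n/2 \rceil$, which gives $2r-1 \geq n-1$ and hence $a_4 \geq q^{n-1}$; for $a_5$ one similarly checks $q^{2r}-q^n+q^r+1 > q^{n-1}$ in the allowed range of $r$. Therefore $m_2 = q^{n-1}$.

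Combining this with the point count $N = q^{2n-1}+1$ from the Proposition of Borges--Concei\c{c}\~ao recalled in the preliminaries, we obtain
\[
m_2 \cdot q^n + 1 \;=\; q^{n-1} \cdot q^n + 1 \;=\; q^{2n-1} + 1 \;=\; \#\mathcal{X}_{n,r}(\mathbb{F}_{q^n}),
\]
which is exactly the Castle condition. There is no real obstacle here; the only mildly delicate point is the elementary check that the other four generators exceed $q^{n-1}$ under the hypothesis $\lceil n/2 \rceil \leq r \leq n-1$, $\gcd(n,r)=1$.
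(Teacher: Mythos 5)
Your proof is correct and follows exactly the argument the paper leaves implicit (the corollary is stated without proof there): symmetry comes from the telescopic property established in the preceding theorem, and $m_2=q^{n-1}$ combined with $N=q^{2n-1}+1$ gives the Castle condition over $\mathbb{F}_{q^n}$. Your care in checking that $q^{n-1}$ really is the smallest generator (including the boundary case $2r=n$, i.e.\ $n=2$, $r=1$, where $a_5=q+1$) is a worthwhile detail the paper omits.
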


\section{Automorphism group of $\mathcal{X}_{n,r}$}

Let us consider the $q^{2n-1}$  affine points   $P:=(\delta,\mu) \in \mathcal{X}_{n,r}(\F_{q^n})$ and all the elements $\gamma \in \F_{q^n}$ such that
\begin{equation*}
\begin{cases}
\gamma ^{q-1}=1& \text{ if } n \text{ is odd  }\\
\gamma ^{q^2-1}=1& \text{ if } n \text{ is even  }.\\
\end{cases}
\end{equation*}

 Using that $\mathcal{X}_{n,r}$ is given by  $T_n(y)=f_r(x)$, where $f_r(x)  = \sum_{i=0}^{n-r-1} (x^{1+q^r})^{q^i}+\sum_{i=0}^{r-1} (x^{1+q^{n-r}})^{q^i}$, one can easily check that the  set $G$ of
maps on $F_{n,r}$, given by 
\begin{equation}\label{autom Xr}
	\alpha_{\gamma,P}:  (x,y) \longrightarrow  (\gamma  x+\delta, \gamma ^{1+q} y+(\delta^{q^{n-r}}+\delta^{q^r})\gamma  x+\mu),	 
	\end{equation}
is a subgroup of  $\aut(\mathcal{X}_{n,r})$,  whose elements fix $P_\infty$, i.e., 
$G\subseteq \aut_{P_\infty}(\mathcal{X}_{n,r})$.  Based on the above definition, note that the following subgroups of $G$:
$$ N=\{\alpha_{\gamma,P} \in G: \gamma=1\}  \text{ and } H=\{\alpha_{\gamma,P} \in G: P=(0,0)\}\cong \F_{q^{2-(n \mod 2)}}^{*} $$
have order $q^{2n-1}$ and  $q^{2-(n \mod 2)}-1$, repectively.

The aim of this section is to prove the following:
\begin{theorem}\label{main.aut} The group $G$ is the full group of automorphisms of  $\mathcal{X}_{n,r}$. Moreover, $N=\aut_{P_\infty}(\mathcal{X}_{n,r})^{(1)}$ and 
$$G=\aut_{P_\infty}(\mathcal{X}_{n,r})=N\rtimes H.$$ 

\end{theorem}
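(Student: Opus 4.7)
The plan is to prove the theorem in two stages: first establish that $\aut(\mathcal{X}_{n,r})$ coincides with the point-stabilizer $\aut_{P_\infty}(\mathcal{X}_{n,r})$ by invoking Theorem~\ref{teorema korchmaros}, and then determine this stabilizer explicitly, recognising $N$ as its first ramification group.

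I would begin by verifying that $G$ has the asserted structure. A direct composition calculation with two maps of the form \eqref{autom Xr} shows that $\alpha_{\gamma,P}\mapsto\gamma$ is a group homomorphism $G\to H$ with kernel $N$, so $N$ is normal and $G=N\rtimes H$, with $|G|=q^{2n-1}(q^{2-(n\bmod 2)}-1)$. Since every element of $N$ is a $p$-element fixing $P_\infty$, we have $N\subseteq\aut_{P_\infty}(\mathcal{X}_{n,r})^{(1)}$, hence $|\aut_{P_\infty}^{(1)}|\geq q^{2n-1}$. Combining with $g=q^r(q^{n-1}-1)/2$ and $r\leq n-1$, we obtain $q^{2n-1}>q^{n+r-1}-q^r+1=2g+1$, so Theorem~\ref{teorema korchmaros} applies at $P_\infty$. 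To conclude that $\aut(\mathcal{X}_{n,r})=\aut_{P_\infty}(\mathcal{X}_{n,r})$, I would exclude the three exceptional families: the DLS and DLR cases are ruled out by their required characteristic ($p=2$ and $p=3$, respectively) together with the closed-form genera, which are incompatible with $q^r(q^{n-1}-1)/2$ for the admissible $(n,r)$; the Hermitian case is excluded by matching both the genus and the number of rational points $q^{2n-1}+1$ against the Hermitian invariants, which forces relations on $(n,r)$ outside the admissible range.

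For the second stage, by \eqref{Gp} write $\aut_{P_\infty}(\mathcal{X}_{n,r})=\aut_{P_\infty}^{(1)}\rtimes H'$ with $H'$ cyclic of order coprime to $p$. Given $\sigma\in\aut_{P_\infty}$, the fact that $q^{n-1}$ is the smallest nonzero element of $H(P_\infty)$, together with $\mathcal{L}(q^{n-1}P_\infty)=\langle 1,x\rangle$, forces $\sigma(x)=\gamma x+\delta$ for some $\gamma\in\mathbb{F}_{q^n}^*$, $\delta\in\mathbb{F}_{q^n}$. Analogously $\mathcal{L}((q^{n-1}+q^{r-1})P_\infty)=\langle 1,x,y\rangle$ forces $\sigma(y)=\gamma^{1+q}y+\lambda x+\mu$ for some $\lambda,\mu\in\mathbb{F}_{q^n}$, where substituting into $T_n(y)=f_r(x)$ and identifying coefficients determines $\lambda=(\delta^{q^{n-r}}+\delta^{q^r})\gamma$ and forces $\gamma$ to satisfy $\gamma^{q-1}=1$ when $n$ is odd and $\gamma^{q^2-1}=1$ when $n$ is even, while $(\delta,\mu)$ is forced to lie in $\mathcal{X}_{n,r}(\mathbb{F}_{q^n})$. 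This is exactly the description \eqref{autom Xr}, so $\aut_{P_\infty}(\mathcal{X}_{n,r})\subseteq G$ and equality holds; the decomposition $G=N\rtimes H$ and the fact that $|H|$ is coprime to $p$ then force $\aut_{P_\infty}^{(1)}=N$ and $H'=H$.

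The main obstacle I anticipate is the coefficient-matching argument in the second stage: extracting from the compatibility of $\sigma(y)$ with $T_n(y)=f_r(x)$ the precise constraint on $\gamma$ requires carefully tracking the Frobenius-power structure of $f_r(x)=\sum_{i=0}^{n-r-1}(x^{1+q^r})^{q^i}+\sum_{i=0}^{r-1}(x^{1+q^{n-r}})^{q^i}$, and the parity of $n$ enters via the interaction of the two sums. A secondary but nontrivial step is the exclusion of the Hermitian case, where for small values of $n$ the genus comparison alone may be insufficient and one needs the additional information coming from the Weierstrass semigroup $H(P_\infty)$ computed in Section~3.
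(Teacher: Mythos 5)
Your proposal follows essentially the same route as the paper: the stabilizer $\aut_{P_\infty}(\mathcal{X}_{n,r})$ is pinned down via the Riemann--Roch spaces $\mathcal{L}(q^{n-1}P_\infty)$ and $\mathcal{L}((q^{n-1}+q^{r-1})P_\infty)$ together with coefficient comparison in $T_n(y)=f_r(x)$, the subgroup $N$ is identified as the unique Sylow $p$-subgroup so that $N=\aut_{P_\infty}(\mathcal{X}_{n,r})^{(1)}$ and $G=N\rtimes H$, and Theorem~\ref{teorema korchmaros} applied with $|\aut_{P_\infty}(\mathcal{X}_{n,r})^{(1)}|=q^{2n-1}>2g+1$ reduces the global statement to excluding the three exceptional curves. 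If anything you are more cautious than the paper on the one delicate point: the paper dismisses the Hermitian alternative by the genus alone, which for $r=n-1$ (where $\mathcal{X}_{n,n-1}$ has genus $q^{n-1}(q^{n-1}-1)/2$, exactly the Hermitian genus) does not suffice, so your suggestion to bring in the point count or the semigroup $H(P_\infty)$ to finish that exclusion is warranted.
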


The proof of Theorem \ref{main.aut} will follow after some partial results.
We begin with the following:
\begin{lemma}\label{lema1}
$\aut_{P_\infty}(\mathcal{X}_{n,r})=G.$

\end{lemma}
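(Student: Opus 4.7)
The plan is to take an arbitrary $\sigma \in \aut_{P_\infty}(\mathcal{X}_{n,r})$ and show that $\sigma$ coincides with some $\alpha_{\gamma, P} \in G$; the reverse inclusion $G \subseteq \aut_{P_\infty}(\mathcal{X}_{n,r})$ is already clear from the construction of $G$.

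Since $\sigma$ fixes $P_\infty$, it preserves every Riemann--Roch space $L(mP_\infty)$. From the Weierstrass semigroup $H(P_\infty)$ computed in the previous theorem, the two smallest positive non-gaps are $a_1 = q^{n-1}$ and $a_2 = q^{n-1}+q^{r-1}$, with no non-gap strictly between them (since $2a_1 > a_2$). Hence $L(a_1 P_\infty) = \langle 1, x\rangle$ and $L(a_2 P_\infty) = \langle 1, x, y\rangle$, so one may write $\sigma(x) = A + Bx$ and $\sigma(y) = C + Dx + Ey$ with $B, E \neq 0$. Applying $\sigma$ to $T_n(y) = f_r(x)$ yields the functional identity
\[
T_n(C) + T_n(Dx) + T_n(Ey) \;=\; f_r(A + Bx) \quad \text{in } F_{n,r}.
\]
Using $y^{q^{n-1}} = f_r(x) - \sum_{i=0}^{n-2} y^{q^i}$, together with the fact that $1, y, \ldots, y^{q^{n-1}-1}$ is a basis of $F_{n,r}$ over $\mathbb{F}_{q^n}(x)$, comparing coefficients of $y^{q^i}$ for $i \le n-2$ forces $E^{q^i} = E^{q^{n-1}}$, whence $E \in \mathbb{F}_q^*$ and $T_n(Ey) = E f_r(x)$.

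The identity then reduces to $T_n(C) + T_n(Dx) + E f_r(x) = f_r(A + Bx)$. I would expand the right-hand side using $(A+Bx)^{q^j} = A^{q^j}+B^{q^j}x^{q^j}$ and compare, in turn, the coefficients of the monomials $x^{q^a+q^b}$ (for $(a,b)$ an index pair of $f_r$), of $x^{q^j}$, and of $x^0$. The first comparison gives $B^{q^a+q^b} = E$ for every index pair; specialising to $(0,r)$ and $(0,n-r)$ and using $E^{q-1}=1$ yields $B^{q^{2r-n}-1}=1$ and $B^{(q-1)(1+q^r)} = 1$, which force $B \in \mathbb{F}_{q^{2-(n \bmod 2)}}^*$ and $E = B^{1+q}$ --- precisely the constraint on $\gamma$ in the definition of $G$. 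The second comparison determines $D = B(A^{q^r}+A^{q^{n-r}})$, and the third gives $T_n(C) = f_r(A)$, i.e.\ $(A, C) \in \mathcal{X}_{n,r}(\mathbb{F}_{q^n})$. Setting $\gamma := B$, $\delta := A$, $\mu := C$ then identifies $\sigma$ with $\alpha_{\gamma,(\delta,\mu)} \in G$.

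The main obstacle is the combinatorial bookkeeping for the $q$-linear coefficients of $f_r(A+Bx)$: identifying, for each $j$, which index pairs $(a,b)$ contribute to the coefficient of $x^{q^j}$ (those with $a = j$ or $b = j$), and checking that the resulting expression equals $D^{q^j}$ for $D = B(A^{q^r}+A^{q^{n-r}})$. This is made tractable by the fact that the offsets $b-a$ appearing in $f_r$ assume only the two values $r$ and $n-r$.
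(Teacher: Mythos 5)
Your proposal is correct and follows essentially the same route as the paper: the Riemann--Roch spaces $\mathcal{L}(q^{n-1}P_\infty)$ and $\mathcal{L}((q^{n-1}+q^{r-1})P_\infty)$ force $\sigma(x)=A+Bx$ and $\sigma(y)=C+Dx+Ey$, and comparing coefficients in the transformed defining equation yields exactly the paper's conditions (the paper organizes the comparison as $F(cx+d,ay+b_1x+b_0)=\lambda F(x,y)$ with $a=\lambda\in\mathbb{F}_q^*$, rather than eliminating $y^{q^{n-1}}$ via the curve equation, but this is the same computation). One point you should make explicit: the comparisons of the $x^{q^j}$-coefficients for $j\neq 0$ are not mere verifications --- together with the $j=0$ relation they are precisely what force $A^{q^n}=A$, i.e.\ $A\in\mathbb{F}_{q^n}$ (the paper's conditions (ii) and (iii)), and this is needed before you may conclude that $(A,C)$ is an $\mathbb{F}_{q^n}$-rational point of $\mathcal{X}_{n,r}$.
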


\begin{proof} As mentioned before, the inclusion $G\subseteq \aut_{P_\infty}(\mathcal{X}_{n,r})$
can be easily verified.  

Now let  $\alpha\in \aut_{P_\infty}(\mathcal{X}_{n,r})$ be  an automorphism. Note that Lemma \ref{lemma functions} implies that $\{1,x\}$ and $\{1,x,y\}$ are bases for the Riemann-Roch spaces $\mathcal{L}(q^{n-1}P_\infty)$ and   $\mathcal{L}((q^{n-1}+q^{r-1})P_\infty)$, respectively.
Therefore,   $\alpha(x)\in \mathcal{L}(q^{n-1}P_\infty)$ and   $\alpha(y)\in \mathcal{L}((q^{n-1}+q^{r-1})P_\infty)$, and
\[\alpha(x)=cx+d\;  \text{ and }   \alpha(y)=ay+b_1x+b_0\;\]
for some  $a,b_0,b_1,c,d \in \overline{\F}_{q^n}$ with $ac\neq 0$. Since $\alpha$ is an automorphism, 
we must have 
$$F(cx+d,ay+b_1x+b_0)=\lambda F(x,y)$$
for some $\lambda \in \overline{\F}_{q^n}$ and $F(x,y):=T_n(y)-f_r(x)$. In particular, 

\begin{itemize}
\item $T_n(ay)=\lambda T_n(y)$ 
\item  $f_r(cx+d)-T_n(b_1x+b_0)=\lambda f_r(x)$.
\end{itemize}

Note that the first equality  gives $a=\lambda \in \F_{q} \backslash \{0\}$. Moreover,
using that $$f_r(x)  = \sum_{i=0}^{n-r-1} (x^{1+q^r})^{q^i}+\sum_{i=0}^{r-1} (x^{1+q^{n-r}})^{q^i}$$  and  comparing coefficients on the second equality   yields $f_r(d)=T_{n}(b_0)$ and

\begin{enumerate}[\rm(i)]
\item  $c^{{q^r}+1} = \lambda = c^{{q^{n-r}}+1}$
\item $(d^{q^r}+d^{q^{n-r}})c=b_1$
\item $(d+d^{q^{2n-2r}})c^{q^{n-r}}={b_1}^{q^{n-r}}.$
\end{enumerate}

Since $\lambda \in \F_{q}\backslash\{0\}$, condition (i) gives
$c^{q^{n-r}}=\frac{\lambda}{c}$, and then $c^{q^{n}}=\frac{\lambda}{c^{q^r}}=c.$
That is, $c\in \F_{q^n}$. Furthemore, since $c^{{q^r}+1} = c^{{q^{n-r}}+1}$ implies
$c^{{q^{2r-n}}-1}=1$, we have that   $c^{\gcd(q^{n}-1,q^{2r-n}-1)}=1$. However,  it follows from $\gcd(n,r)=1$ that

\begin{equation}\label{eq2} \gcd(q^{n}-1,q^{2r-n}-1)=
\begin{cases}
q-1&\text{ if } $n$ \text{ is odd }\\
q^2-1&\text{ if } $n$ \text{ is even },
\end{cases}
\end{equation}
which proves the  assertion about $c$. In particular, (i) gives $a=\lambda=c^{q+1}$.

 Using conditions (ii) and (iii), one can easily see that $d\in \F_{q^n}$ as well. Therefore $f_r(d) \in \F_{q}$, and then $f_r(d)=T_{n}(b_0)$ implies that $(d,b_0)$ is an $\F_{q^n}$-rational point on $\mathcal{X}_{n,r}$. This finishes the proof.

\end{proof}

\begin{lemma} \label{theorem order}  $\aut_{P_\infty}(\mathcal{X}_{n,r})^{(1)}=N$ and 
$\aut_{P_\infty}(\mathcal{X}_{n,r})=N\rtimes H$.

\end{lemma}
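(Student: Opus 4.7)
The plan is to combine Lemma \ref{lema1}, which identifies $\aut_{P_\infty}(\mathcal{X}_{n,r})$ with $G$, with the structural fact recalled in Section 2.2 that the first ramification group $\mathbb{G}_P^{(1)}$ is the unique Sylow $p$-subgroup of $\mathbb{G}_P$ and that one has a decomposition $\mathbb{G}_P = \mathbb{G}_P^{(1)} \rtimes H$ for a cyclic complement $H$ of order coprime to $p$. If I can identify $N$ as the Sylow $p$-subgroup of $G$, and $H$ as a complement of the correct order, both assertions follow at once.

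First I would count $|G|$. Reading off \eqref{autom Xr}, the parameters $\gamma$, $\delta$ and $\mu$ can each be recovered from $\alpha_{\gamma,(\delta,\mu)}$ --- as the leading coefficient of $\alpha(x)$, its constant term, and (once these are known) the constant term of $\alpha(y)$, respectively --- so distinct triples produce distinct automorphisms. Hence $|G| = q^{2n-1}(q^{2-(n \mod 2)}-1)$, the product of the number of affine $\F_{q^n}$-rational points of $\mathcal{X}_{n,r}$ and the number of admissible $\gamma$. Since $N \cap H = \{\mathrm{id}\}$ is immediate (the only element of $G$ with $\gamma = 1$ and $(\delta,\mu)=(0,0)$ is the identity), the orders $|N| = q^{2n-1}$ and $|H| = q^{2-(n \mod 2)}-1$ satisfy $|N||H| = |G|$, whence $G = NH$.

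The crux is then a simple parity-of-orders observation: $|N| = q^{2n-1}$ is a power of the characteristic $p$, whereas $|H| = q^{2-(n \mod 2)}-1$ is coprime to $p$. Consequently $N$ is a Sylow $p$-subgroup of $G$, and by the uniqueness recalled in \eqref{Gp} it must coincide with the first ramification group, yielding $N = \aut_{P_\infty}(\mathcal{X}_{n,r})^{(1)}$. In particular $N$ is normal in $G$, and combined with $G = NH$ and $N \cap H = \{\mathrm{id}\}$ this delivers $G = N \rtimes H$. I do not anticipate a serious obstacle here: the whole argument hinges on the clean split of $|G|$ into its $p$-part and its coprime-to-$p$ part, which is exactly what the structure recalled in \eqref{Gp} requires.
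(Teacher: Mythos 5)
Your proof is correct and follows essentially the same route as the paper: both identify $G=\aut_{P_\infty}(\mathcal{X}_{n,r})$ via Lemma \ref{lema1}, observe that $|N|=q^{2n-1}$ is the full $p$-part of $|G|=q^{2n-1}(q^{2-(n\bmod 2)}-1)$, and invoke the structural fact from Subsection 2.2 that the first ramification group is the unique Sylow $p$-subgroup admitting a cyclic complement, giving $N=\aut_{P_\infty}(\mathcal{X}_{n,r})^{(1)}$ and $G=N\rtimes H$. Your added verifications ($N\cap H$ trivial, distinctness of the $\alpha_{\gamma,P}$, $G=NH$) only make explicit what the paper leaves implicit.
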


\begin{proof}

Since $|N|=q^{2n-1}$ and  $|G|=q^{2n-1}(q^{2-(n \mod 2)}-1)$, it follows from Subsection 2.2 that $N$ must be the unique Sylow $p$-subgroup of $G=\aut_{P_\infty}(\mathcal{X}_{n,r})$, i.e., $N=\aut_{P_\infty}^{(1)}(\mathcal{X}_{n,r})$. Also, from \eqref{Gp}, it  follows that the cyclic group $H \cong G/N$ is a complement in G, i.e., $G=N\rtimes H$.

\end{proof}
\noindent
{\bf Proof of Theorem \ref{main.aut}}

In view of the results in Lemmas \ref{lema1} and \ref{theorem order}, it suffices to prove that
$\aut(\mathcal{X}_{n,r})=\aut_{P_\infty}(\mathcal{X}_{n,r})$. In fact, first recall that $\mathcal{X}_{n,r}$
has genus $g= \frac{q^{r}(q^{n-1}-1)}{2}$. Thus, for  $n>2$, the curve $\mathcal{X}_{n,r}$ cannot be birrationally equivalent to any of the  curves  listed in Theorem \ref{teorema korchmaros}. Therefore, since 
  $$|\aut_{P_\infty}(\mathcal{X}_{n,r})^{(1)}| = q^{2n-1} > 2g+1 = q^{r}(q^{n-1}-1) + 1,$$
  if follows from Theorem \ref{teorema korchmaros} that $\aut(\mathcal{X}_{n,r})=\aut_{P_\infty}(\mathcal{X}_{n,r})$, as desired. \qed

\end{document}